  \newtheorem{lem}{Lemma}
  \newtheorem{thm}{Theorem}
  \newtheorem{cor}{Corollary}
  \newtheorem{conj}{Conjecture}
\pgfplotsset{compat=1.14}
  \title{Asymptotic expansions of truncated hypergeometric series for $1/\pi$}
  \author{Lorenz Milla\footnote{Heidelberg, Germany, ORCID 0000-0002-6365-5958},\quad
        Chao-Ping Chen\footnote{School of Mathematics and Information Science, Henan Polytechnic University, Jiaozuo City 454003, Henan Province, China}}
  \date{23/07/2024}
\begin{document}

  \maketitle
  
  \begin{abstract}
    In this paper, we consider rational hypergeometric series of the form
    \[\frac{p}{\pi}= \sum_{k=0}^\infty u_k\quad\text{with}\quad u_k=\frac{\left(\frac{1}{2}\right)_k \left(q\right)_k \left(1-q\right)_k}{(k!)^3}(r+s\,k)\,t^k,\]
    where $(a)_k$ denotes the Pochhammer symbol and $p,q,r,s,t$ are algebraic coefficients.
    Using only the first $n+1$ terms of this series, we define the remainder
    \[\mathcal{R}_n = \frac{p}{\pi} - \sum_{k=0}^n u_k=\sum_{k=n+1}^\infty u_k.\]
     We consider an asymptotic expansion of $\mathcal{R}_n$. More precisely, we provide a recursive relation for determining the coefficients $c_j$ such that
    \[  \mathcal{R}_n    = \frac{\left(\frac{1}{2}\right)_n \left(q\right)_n \left(1-q\right)_n}{n!^3}nt^n\left(\sum_{j=0}^{J-1}\frac{c_j}{n^j}+\mathcal{O}\left(n^{-J}\right)\right),\qquad n \rightarrow \infty.\]
    Here we need $J<\infty$ to approximate $\mathcal{R}_n$, because (like the Stirling series) this series diverges if $J\rightarrow\infty$.
    By applying our recursive relation to the Chudnovsky formula, we solve an open problem posed by Han and Chen.
  \end{abstract}

  \noindent\textit{MSC 2010:} primary 11Y60, secondary 33C20, tertiary 40A05\\
  \noindent\textit{Keywords:} Constant $\pi$; Hypergeometric series; Asymptotic expansion; Ramanujan; Chudnovsky algorithm; Stirling series

\section{Introduction}
Ramanujan \cite{rama1914} gave 17 series for $1/\pi$, the fastest among them is
\begin{align}
    \frac{1}{2\pi\sqrt{2}}&=\sum_{k=0}^{\infty}\frac{1}{2^{8k}}\frac{(4k)!}{(k!)^4}\,\frac{1103+26390\,k}{99^{4k+2}}.
\end{align}
Han and Chen  \cite{HanChen} defined the remainder
\begin{align}
        R_n &= \frac{1}{2\pi\sqrt{2}} - \sum_{k=0}^n \frac{1}{2^{8k}}\frac{(4k)!}{(k!)^4}\,\frac{1103+26390\,k}{99^{4k+2}}
\end{align}
and provided a recursive relation for determining the coefficients $r_j$ in the asymptotic expansion
\begin{align}
        R_n &\sim \frac{1}{2^{8n}}\frac{(4n)!}{(n!)^4}\,\frac{n}{99^{4n+2}}\sum_{j=0}^\infty \frac{r_j}{n^j},\qquad n \rightarrow \infty.
\end{align}

The Chudnovsky brothers \cite{chud1988} found an even faster series, which was used in most recent computations of $\pi$ -- it reads
\begin{align}
        \frac{3}{2\pi\sqrt{10005}} &= \sum_{k=0}^\infty(-1)^k \frac{(6k)!}{12^{3k}(3k)!(k!)^3} \frac{13591409 + 545140134\,k}{ 53360^{3k+2}}.
\end{align}
Berndt and Chan gave more details on this series in \cite{Berndt}.
Milla provided a detailed proof of this series using elliptic curves and the Picard Fuchs differential equation in \cite{MillaArxiv} and calculated the coefficients in the Chudnovskys' series using the integrality of certain non-holomorphic modular functions in \cite{millaRamJ}.

Cohen and Guillera \cite{cohenGuillera} summarized all known rational hypergeometric series for $1/\pi$. They can be expressed in the following form with Pochhammer symbols $(a)_k$:
\begin{align}
    \frac{p}{\pi}&= \sum_{k=0}^\infty \frac{\left(\frac{1}{2}\right)_k \left(q\right)_k \left(1-q\right)_k}{(k!)^3}(r+s\,k)\,t^k, \qquad (a)_k=\frac{\Gamma(a+k)}{\Gamma(k)},\label{typersz}
\end{align}
where the coefficients $p,q,r,s,t$ are algebraic numbers. The possible combinations are listed in Table \ref{L0N1}.
\begin{table}[ht!]
    \centering
\begin{tabular}{c|c|c|c|c|c}
\textbf{\#} & \textbf{p} & \textbf{q} & \textbf{r} & \textbf{s} & \textbf{t} \\
\hline
\textbf{1} & $5\sqrt{15}$ & $1/6$ & $8$ & $63$ & $-64/125$ \\
\textbf{2} & $32\sqrt{2}$ & $1/6$ & $15$ & $154$ & $-27/512$ \\
\textbf{3} & $32\sqrt{6}$ & $1/6$ & $25$ & $342$ & $-1/512$ \\
\textbf{4} & $160\sqrt{30}/9$ & $1/6$ & $31$ & $506$ & $-9/64000$ \\
\textbf{5} & $640\sqrt{15}/3$ & $1/6$ & $263$ & $5418$ & $-1/512000$ \\
\textbf{6} & $1760\sqrt{330}$ & $1/6$ & $10177$ & $261702$ & $-1/440^3$ \\
\textbf{7} & $426880\sqrt{10005}$ & $1/6$ & $13591409$ & $545140134$ & $-1/53360^3$ \\
\textbf{8} & $5\sqrt{5}$ & $1/6$ & $3$ & $28$ & $27/125$ \\
\textbf{9} & $5\sqrt{15}/6$ & $1/6$ & $1$ & $11$ & $4/125$ \\
\textbf{10} & $11\sqrt{33}/4$ & $1/6$ & $5$ & $63$ & $8/1331$ \\
\textbf{11} & $85\sqrt{255}/54$ & $1/6$ & $8$ & $133$ & $64/614125$ \\
\hdashline
\textbf{12} & $8$ & $1/4$ & $3$ & $20$ & $-1/4$ \\
\textbf{13} & $72$ & $1/4$ & $23$ & $260$ & $-1/324$ \\
\textbf{14} & $3528$ & $1/4$ & $1123$ & $21460$ & $-1/777924$ \\
\textbf{15} & $9\sqrt{7}$ & $1/4$ & $8$ & $65$ & $-256/3969$ \\
\textbf{16} & $16\sqrt{3}/3$ & $1/4$ & $3$ & $28$ & $-1/48$ \\
\textbf{17} & $288\sqrt{5}/5$ & $1/4$ & $41$ & $644$ & $-1/25920$ \\
\textbf{18} & $9/2$ & $1/4$ & $1$ & $7$ & $32/81$ \\
\textbf{19} & $2\sqrt{3}$ & $1/4$ & $1$ & $8$ & $1/9$ \\
\textbf{20} & $9\sqrt{2}/4$ & $1/4$ & $1$ & $10$ & $1/81$ \\
\textbf{21} & $49\sqrt{3}/9$ & $1/4$ & $3$ & $40$ & $1/2401$ \\
\textbf{22} & $18\sqrt{11}$ & $1/4$ & $19$ & $280$ & $1/9801$ \\
\textbf{23} & $9801\sqrt{2}/4$ & $1/4$ & $1103$ & $26390$ & $1/99^4$ \\
\hdashline
\textbf{24} & $12\sqrt{3}$ & $1/3$ & $7$ & $51$ & $-1/16$ \\
\textbf{25} & $96\sqrt{3}$ & $1/3$ & $53$ & $615$ & $-1/1024$ \\
\textbf{26} & $1500\sqrt{3}$ & $1/3$ & $827$ & $14151$ & $-1/250000$ \\
\textbf{27} & $4\sqrt{3}/3$ & $1/3$ & $1$ & $5$ & $-9/16$ \\
\textbf{28} & $4\sqrt{15}/5$ & $1/3$ & $1$ & $9$ & $-1/80$ \\
\textbf{29} & $108\sqrt{7}/7$ & $1/3$ & $13$ & $165$ & $-1/3024$ \\
\textbf{30} & $3\sqrt{3}$ & $1/3$ & $1$ & $6$ & $1/2$ \\
\textbf{31} & $27/4$ & $1/3$ & $2$ & $15$ & $2/27$ \\
\textbf{32} & $15\sqrt{3}/2$ & $1/3$ & $4$ & $33$ & $4/125$ \\
\hdashline
\textbf{33} & $2$ & $1/2$ & $1$ & $4$ & $-1$ \\
\textbf{34} & $2\sqrt{2}$ & $1/2$ & $1$ & $6$ & $-1/8$ \\
\textbf{35} & $4$ & $1/2$ & $1$ & $6$ & $1/4$ \\
\textbf{36} & $16$ & $1/2$ & $5$ & $42$ & $1/64$ 

\end{tabular}
\vspace{6pt}
    \caption{Algebraic coefficients $p,q,r,s,t$ in formulae of type (\ref{typersz}).}
    \label{L0N1}
\end{table}

In this paper, we define the remainder
\begin{align}
        \mathcal{R}_n &= \frac{p}{\pi} - \sum_{k=0}^n \frac{\left(\frac{1}{2}\right)_k \left(q\right)_k \left(1-q\right)_k}{(k!)^3}(r+s\,k)\,t^k \label{defiRn}
\end{align}
and prove a new way (Theorem \ref{ThmNew}) to determine the coefficients $c_j$ in the following asymptotic expansion of $\mathcal{R}_n$ at $n\rightarrow\infty$:
\begin{align}
  \mathcal{R}_n
    = \frac{\left(\frac{1}{2}\right)_n \left(q\right)_n \left(1-q\right)_n}{n!^3}nt^n\left(\sum_{j=0}^{J-1}\frac{c_j}{n^j}+\mathcal{O}\left(n^{-J}\right)\right).
\end{align}
Like the Stirling series for $n!$, the series $\sum_{j=0}^{J-1}\frac{c_j}{n^j}$ diverges if $J\rightarrow\infty$. Therefore, it is necessary to select an appropriate finite $J<\infty$ to approximate $\mathcal{R}_n$.

Applying Theorem \ref{ThmNew} to the Chudnovskys' series solves an open problem posed by Han and Chen \cite{HanChen}.
Our proof does not rely on specialized software -- it uses a technique described by Milla in \cite[Sect. 3]{millaRamJ2}.
Throughout this paper, $\mathbb{N}$ represents the set of positive integers and $\mathbb{N}_0:=\mathbb{N}\cup\{0\}$. 


\section{Convergence of asymptotic expansion}
In this section, we prove the existence and convergence of an asymptotic expansion of $\mathcal{R}_n$ at $n\rightarrow\infty$. However, we do not directly compute the coefficients $c_j$ of this expansion yet. Our proof begins with the Stirling series.
\begin{lem}\label{lem1}
    For $0<q<1$ and $n\geq 1$ we define $\alpha_n$ as follows:
    $$\frac{\left(\frac{1}{2}\right)_n \left(q\right)_n \left(1-q\right)_n}{n!^3}
    =  \frac{\sin(\pi q)}{(\pi n)^{3/2}}\exp(\alpha_n).$$
    Then, for $J\in\mathbb N$, we have the following asymptotic expansion of $\alpha_n$ at $n\rightarrow\infty$:
    $$\alpha_n =  \sum_{j=1}^{J-1}\frac{f_j}{n^j}+\mathcal{O}\left(n^{-J}\right)$$
    with coefficients $(f_j)_{j\in\mathbb N}$.
    Two further remarks:\begin{itemize}
    \item A way to compute the coefficients $f_j$ is given in (\ref{koeffan}).
    \item Caution: The series $\sum_{j=1}^\infty\frac{f_j}{n^j}$ diverges for all $n\in\mathbb N$, thus we need $J<\infty$.\end{itemize}
\end{lem}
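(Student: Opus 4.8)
\noindent\emph{Proof strategy.} The plan is to reduce $\alpha_n$ to the classical Stirling (Binet/Barnes) expansion of $\ln\Gamma$. First I would write each Pochhammer symbol as a quotient of Gamma functions, $\left(a\right)_n=\Gamma(n+a)/\Gamma(a)$, so that
\[
\frac{\left(\tfrac12\right)_n\left(q\right)_n\left(1-q\right)_n}{n!^3}
=\frac{1}{\Gamma(\tfrac12)\,\Gamma(q)\,\Gamma(1-q)}\cdot\frac{\Gamma(n+\tfrac12)\,\Gamma(n+q)\,\Gamma(n+1-q)}{\Gamma(n+1)^3}.
\]
The constant prefactor collapses via $\Gamma(\tfrac12)=\sqrt\pi$ and Euler's reflection formula $\Gamma(q)\Gamma(1-q)=\pi/\sin(\pi q)$ into exactly $\sin(\pi q)/\pi^{3/2}$. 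Comparing with the defining relation for $\alpha_n$ and cancelling $\sin(\pi q)/\pi^{3/2}$ on both sides identifies
\[
\alpha_n=\tfrac32\ln n+\ln\Gamma(n+\tfrac12)+\ln\Gamma(n+q)+\ln\Gamma(n+1-q)-3\ln\Gamma(n+1),
\]
an exact identity for every $n\ge 1$ and $0<q<1$ (all Gamma arguments are positive, so there is no branch ambiguity).

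Next I would substitute the generalized Stirling series: for fixed $a$ and $n\to\infty$,
\[
\ln\Gamma(n+a)=\left(n+a-\tfrac12\right)\ln n-n+\tfrac12\ln(2\pi)+\sum_{k=1}^{J-1}\frac{(-1)^{k+1}B_{k+1}(a)}{k(k+1)\,n^k}+\mathcal O\!\left(n^{-J}\right),
\]
where $B_{k+1}$ denotes the Bernoulli polynomial. Applying this to the four Gamma factors above and collecting terms, the crucial bookkeeping is that the signed multiplicities $1,1,1,-3$ satisfy $1+1+1-3=0$ and the signed shift parameters satisfy $\tfrac12+q+(1-q)-3\cdot 1=-\tfrac32$. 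Hence the coefficients of $n\ln n$, of $n$, and of the constant $\tfrac12\ln(2\pi)$ all cancel; the leftover $-\tfrac32\ln n$ cancels against the explicit $+\tfrac32\ln n$; and what survives is a pure series in negative powers of $n$,
\[
\alpha_n=\sum_{k=1}^{J-1}\frac{f_k}{n^k}+\mathcal O\!\left(n^{-J}\right),\qquad
f_k=\frac{(-1)^{k+1}}{k(k+1)}\Bigl(B_{k+1}(\tfrac12)+B_{k+1}(q)+B_{k+1}(1-q)-3B_{k+1}(1)\Bigr),
\]
which is exactly the formula recorded in (\ref{koeffan}); the four error terms add to $\mathcal O(n^{-J})$ since $J<\infty$.

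For the divergence remark I would argue that $\sum_k f_k/n^k$ cannot converge for any $n$: from $B_{k+1}(1-q)=(-1)^{k+1}B_{k+1}(q)$ the even-index terms cancel ($f_k=0$ for even $k\ge 2$, using also $B_{k+1}(\tfrac12)=(2^{-k}-1)B_{k+1}$ and $B_{k+1}=0$ for odd $k+1\ge 3$), while the odd-index coefficients contain the term $\tfrac{2}{k(k+1)}B_{k+1}(q)$, and $|B_{k+1}(q)|$ grows like $2\,(k+1)!/(2\pi)^{k+1}$ (from the Fourier series of the Bernoulli polynomials), so $|f_k|/n^k\to\infty$ for every fixed $n$ — just as for the ordinary Stirling series. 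The step I expect to be the real crux is the cancellation bookkeeping that forces every non-negative power of $n$ to drop out: this is the content of the lemma and should be displayed in full rather than asserted. The other mild technicalities are citing a version of the $\ln\Gamma$ expansion with an explicit, uniform remainder, and the standard Bernoulli-polynomial growth bound used in the divergence claim; neither is an obstacle.
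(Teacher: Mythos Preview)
Your argument is correct and follows the same Stirling-based route as the paper: rewrite the Pochhammer quotient via Gamma functions, use $\Gamma(\tfrac12)=\sqrt\pi$ and the reflection formula to extract $\sin(\pi q)/\pi^{3/2}$, and then expand $\ln\Gamma$ asymptotically so that all non-negative powers of $n$ cancel. The packaging differs slightly. The paper works with the \emph{unshifted} Stirling series for $\ln\Gamma(x+1)$ (with Binet's remainder $\Theta$), computes $\ln\bigl(\Gamma(n+1-a)/\Gamma(n+1)\bigr)$ by Taylor-expanding $\ln(1-a/n)$, and then expands each $\Theta(n-a)$ about $n$ via a binomial series; this produces the rather unwieldy double sum in~(\ref{koeffan}), which is \emph{not} literally the closed form you wrote down --- so your phrase ``exactly the formula recorded in~(\ref{koeffan})'' should be dropped. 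Your direct use of the Barnes expansion $\ln\Gamma(n+a)$ with Bernoulli polynomials is more economical and gives the cleaner closed form
\[
f_k=\frac{(-1)^{k+1}}{k(k+1)}\Bigl(B_{k+1}(\tfrac12)+B_{k+1}(q)+B_{k+1}(1-q)-3B_{k+1}(1)\Bigr),
\]
from which the vanishing of $f_k$ for even $k\ge 2$ and the factorial growth of the odd-index coefficients (via the Fourier series of $B_{k+1}$) are immediately visible --- refinements the paper's version does not make explicit. Both versions rest on the same two external ingredients: a Stirling-type expansion with an explicit $\mathcal O(n^{-J})$ remainder, and the standard growth estimate for Bernoulli numbers/polynomials.
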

\begin{proof}

The Stirling series for the $\Gamma$-function is (see \cite{sasvari}):
\begin{align}
    \ln(\Gamma(x+1)) &= \frac{1}{2}\ln(2\pi)+\left(x+\frac{1}{2}\right)\ln(x)-x+\Theta(x)\label{acht}\\
    \text{with }\Theta(x)&= \int_0^\infty \left(\frac{1}{e^t-1}-\frac{1}{t}+\frac{1}{2}\right)e^{-xt}\frac{1}{t}dt
\end{align}
with the following asymptotic expansion of $\Theta(x)$ for $x\rightarrow\infty$, where $B_{2l}$ denote the Bernoulli numbers:
\begin{align}
    \Theta(x) &=
    \sum_{l=1}^L \frac{B_{2l}}{2l(2l-1)x^{2l-1}}+\mathcal{O}\left(x^{-(2L+1)}\right)\qquad\text{if } L<\infty.\label{thetastirling}
\end{align}
This is a divergent series if $L\rightarrow\infty$, but it envelops $\Theta(x)$ if $L<\infty$ (see \cite{sasvari}):
\begin{align}
       \sum_{l=1}^{2L} \frac{B_{2l}}{2l(2l-1)x^{2l-1}}
       < \Theta(x) <
    \sum_{l=1}^{2L+1} \frac{B_{2l}}{2l(2l-1)x^{2l-1}} \qquad\text{if } L<\infty.
\end{align}
From (\ref{acht}) we deduce:
\begin{align}
    \ln\left(\frac{\Gamma(n+1-q)}{\Gamma(n+1)}\right)
    &=\left(n+\frac{1}{2}-q\right)\ln\left(1-\frac{q}{n}\right) +q\nonumber\\
    &~~~~- q\ln(n)+\Theta(n-q)-\Theta(n)\nonumber\\
    &= \sum_{l=1}^\infty\frac{A_{q,l}}{n^l}- q\ln(n)+\Theta(n-q)-\Theta(n)\label{eq1}\\
    \text{with } A_{q,l} &= \left(q-\frac{1}{2}\right)\frac{q^l}{l}-\frac{q^{l+1}}{l+1} = -\frac{(l + 1 - 2 q) q^l}{2 l (l+1)}\nonumber
\end{align}
Now we use $(q)_n={\Gamma(n+q)}/{\Gamma(q)}$ in the definition of $\alpha_n$ from Lemma \ref{lem1}: 
\begin{align}
    \alpha_n &= \ln\left(\frac{\left(\frac{1}{2}\right)_n \left(q\right)_n \left(1-q\right)_n}{n!^3}\cdot\frac{(\pi n)^{3/2}}{\sin(\pi q)}\right)\label{defan}\\
   &= \ln\left(\frac{\Gamma(n+1/2)\Gamma(n+q)\Gamma(n+1-q)}{\Gamma(n+1)^3}\cdot n^{1/2}\cdot n^q\cdot n^{1-q}\right)\label{vierzehn}
\end{align}
In the last step, we used $\Gamma(q)\Gamma(1-q)= {\pi}/{\sin(\pi q)}$ and $\Gamma\left(\frac{1}{2}\right)=\sqrt{\pi}$.
Applying (\ref{eq1}) to (\ref{vierzehn}) yields
\begin{align}
    \alpha_n &= \sum_{l=1}^\infty\frac{A_{q,l}+A_{1-q,l}+A_{1/2,l}}{n^l}\nonumber\\
    &~~~~+\Theta(n-q)+\Theta(n-(1-q))+\Theta(n-1/2)-3\Theta(n).\label{asdf}
\end{align}
To express $\Theta(n-q)$ as an asymptotic series for $n\rightarrow\infty$, we use
\begin{align}
    \frac{1}{(n-q)^{2l-1}}&=\frac{1}{n^{2l-1}}\frac{1}{(1-q/n)^{2l-1}}
    =\frac{1}{n^{2l-1}}\sum_{m=0}^\infty \binom{2l-2+m}{m}\frac{q^m}{n^m}.
\end{align}
This series is convergent for all $n\in\mathbb N$, because $0<q<1$. Next, we have
\begin{align}
    \Theta(n-q) &= \sum_{l=1}^L \frac{B_{2l}}{2l(2l-1)(n-q)^{2l-1}}+\mathcal{O}\left((n-q)^{-(2L+1)}\right)\nonumber\\
    &= \sum_{l=1}^L \sum_{m=0}^\infty \frac{B_{2l}\binom{2l-2+m}{m}q^m}{2l(2l-1)n^{m+2l-1}}+\mathcal{O}\left(n^{-(2L+1)}\right)\nonumber\\
    &= \sum_{l=1}^L \sum_{m=0}^{2L+1-2l} \frac{B_{2l}\binom{2l-2+m}{m}q^m}{2l(2l-1)n^{m+2l-1}}+\mathcal{O}\left(n^{-(2L+1)}\right).\label{bernou}
\end{align}
Substituting (\ref{bernou}) and (\ref{thetastirling}) into (\ref{asdf}) gives the expansion of $\alpha_n$:
\begin{align}
    \alpha_n &= \sum_{l=1}^L \frac{B_{2l}}{2l(2l-1)n^{2l-1}}\left(-3+\sum_{m=0}^{2L+1-2l} \frac{\binom{2l-2+m}{m}\left(q^m+(1-q)^m+\left(\frac{1}{2}\right)^m\right)}{n^{m}}\right)\nonumber\\
    &~~~~+\sum_{l=1}^{2L}\frac{A_{q,l}+A_{1-q,l}+A_{1/2,l}}{n^l}+\mathcal{O}\left(n^{-(2L+1)}\right)\label{koeffan}
\end{align}
This finishes the proof of Lemma \ref{lem1}.
As mentioned in Lemma \ref{lem1}, this expansion for $\alpha_n$ diverges when $J=2L+1\rightarrow\infty$, since the Stirling series involving Bernoulli numbers is divergent, $B_{2 n} \sim (-1)^{n + 1} 4\sqrt {\pi n} \left(\frac{n}{\pi e}\right)^{2 n}$.
\end{proof}
\pagebreak

\begin{lem}\label{lem2}
    Let $\mathcal{R}_n$ denote the remainder of the truncated hypergeometric series (see (\ref{defiRn})). Let $\left|t\right|<1$ and $0<q<1$ and $J\in\mathbb N$. 
    Then $\mathcal{R}_n$ has the following asymptotic expansion at $n\rightarrow\infty$:
    \begin{align*}
    \mathcal{R}_n
    = \frac{\left(\frac{1}{2}\right)_n \left(q\right)_n \left(1-q\right)_n}{n!^3}nt^n\left(\sum_{j=0}^{J-1}\frac{c_j}{n^j}+\mathcal{O}\left(n^{-J}\right)\right),
    \end{align*}
    with coefficients $(c_j)_{j\in\mathbb N_0}$.
    Two further remarks:\begin{itemize}
    \item An efficient way to compute the coefficients $c_j$ is proven in Theorem \ref{ThmNew}.
    \item Caution: The series $\sum_{j=0}^\infty\frac{c_j}{n^j}$ diverges for all $n\in\mathbb N$, thus we need $J<\infty$.\end{itemize}
\end{lem}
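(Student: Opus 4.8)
The plan is to reduce everything to Lemma~\ref{lem1}. Write $v_n:=\frac{(1/2)_n(q)_n(1-q)_n}{n!^3}\,n\,t^n$ for the claimed prefactor, so that $\mathcal R_n=\sum_{k=n+1}^\infty u_k=v_n\sum_{i=1}^\infty\frac{u_{n+i}}{v_n}$, which is a convergent rearrangement because $|t|<1$. Using $(a)_m=\Gamma(a+m)/\Gamma(a)$ and Lemma~\ref{lem1} to rewrite the ratio of $\Gamma$-quotients, one obtains the exact identity
\[
  \frac{u_{n+i}}{v_n}=\frac{r+s(n+i)}{n}\,t^i\,\Bigl(1+\tfrac in\Bigr)^{-3/2}\exp\bigl(\alpha_{n+i}-\alpha_n\bigr),
\]
with $\alpha_m$ as in Lemma~\ref{lem1}. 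The idea is then to expand the right-hand side in powers of $1/n$ term by term in $i$ and sum over $i$; the only subtlety, addressed below, is that this expansion is usable only when $i=o(n)$.

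First I would dispatch the tail. Since $(m+q)(m+1-q)=m^2+m+q(1-q)\le(m+\tfrac12)^2$, every factor of the telescoping product $\prod_{l=0}^{i-1}\frac{(n+l+\frac12)(n+l+q)(n+l+1-q)}{(n+l+1)^3}$ is at most $1$, so $\bigl|\frac{(1/2)_m(q)_m(1-q)_m}{m!^3}\bigr|$ is non-increasing in $m$; together with the displayed identity this gives the crude bound $\bigl|\frac{u_{n+i}}{v_n}\bigr|\le\bigl(|r|+|s|(1+i)\bigr)|t|^i$. Taking $I=I_n:=\bigl\lceil(J+1)\log n/\log(1/|t|)\bigr\rceil$, the tail $\sum_{i>I_n}\frac{u_{n+i}}{v_n}$ is $O(n^{-J})$, because $\sum_{i>I}\bigl(|r|+|s|(1+i)\bigr)|t|^i$ decays geometrically in $I$.

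For the main part $\sum_{i=1}^{I_n}\frac{u_{n+i}}{v_n}$ we may assume $i\le I_n$, so $i/n\to0$, and I would expand each factor of the identity: $\frac{r+s(n+i)}{n}=s+\frac{r+si}{n}$ is exact; $\bigl(1+\tfrac in\bigr)^{-3/2}=\sum_{m=0}^{J-1}\binom{-3/2}{m}\bigl(\tfrac in\bigr)^m+O\bigl((i/n)^J\bigr)$; and, inserting $\alpha_m=\sum_{l=1}^{J-1}f_l m^{-l}+O(m^{-J})$ from Lemma~\ref{lem1} together with $(n+i)^{-l}-n^{-l}=n^{-l}\sum_{m\ge1}\binom{-l}{m}(i/n)^m$, one finds $\alpha_{n+i}-\alpha_n=\sum_{j=2}^{J-1}\tilde g_j(i)\,n^{-j}+O\bigl(\mathrm{poly}(i)\,n^{-J}\bigr)$ with polynomials $\tilde g_j$, hence $\exp(\alpha_{n+i}-\alpha_n)=1+\sum_{j=2}^{J-1}\hat g_j(i)\,n^{-j}+O\bigl(\mathrm{poly}(i)\,n^{-J}\bigr)$. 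Multiplying these three finite expansions and collecting powers of $1/n$ gives $\frac{u_{n+i}}{v_n}=t^i\bigl(\sum_{j=0}^{J-1}h_j(i)\,n^{-j}+O(\mathrm{poly}(i)\,n^{-J})\bigr)$ with polynomials $h_j$ (indeed $h_0\equiv s$). Summing $i$ from $1$ to $I_n$: the accumulated error is at most $n^{-J}\sum_{i=1}^\infty\mathrm{poly}(i)|t|^i=O(n^{-J})$, while $\sum_{i=1}^{I_n}h_j(i)t^i=c_j+O(n^{-J})$ with $c_j:=\sum_{i=1}^\infty h_j(i)t^i$ convergent because $|t|<1$. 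Adding the tail back yields $\mathcal R_n/v_n=\sum_{j=0}^{J-1}c_j/n^j+O(n^{-J})$, which is the assertion.

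The step I expect to be the main obstacle is the bookkeeping of uniformity in $i$: the per-term expansions in $1/n$ are legitimate only for $i=o(n)$, which forces the geometric tail to be cut off \emph{before} expanding, and every $O$-term must be kept polynomial in $i$ so that it remains summable against $|t|^i$; one also has to check that all implied constants depend only on $J,q,r,s,t$. No separate argument is needed for the divergence of $\sum_j c_j/n^j$ as $J\to\infty$, since it is inherited from the divergence of the Stirling series already recorded in Lemma~\ref{lem1}. The explicit recursion for the coefficients $c_j$ is deferred to Theorem~\ref{ThmNew}.
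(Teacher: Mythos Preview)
Your proof is correct and follows essentially the same strategy as the paper: write $\mathcal R_n/\mathcal F_n$ as a sum via Lemma~\ref{lem1}, truncate, expand each summand in powers of $1/n$, sum, and re-extend. The only cosmetic differences are that the paper truncates at $k\le\lfloor n/2\rfloor$ rather than at $I_n\sim C\log n$, and packages the expansion by viewing $D_{n,k}$ as a two-variable function $f(1/n,k/n)$ with a uniform Taylor remainder on $(0,1]\times(0,\tfrac12]$, thereby replacing your explicit $O(\mathrm{poly}(i)\,n^{-J})$ bookkeeping with a single bound $|\phi_{n,k}|<\Delta$.
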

\begin{proof}
First, we define $u_n$ and $\mathcal{F}_n$ and use Lemma \ref{lem1}:
\begin{align}
u_n &= \frac{\left(\frac{1}{2}\right)_n \left(q\right)_n \left(1-q\right)_n}{n!^3}(r+sn) t^n = \frac{\sin(\pi q)}{(\pi n)^{3/2}}\exp(\alpha_n)(r+sn) t^n\nonumber\\
\mathcal{F}_n &= \frac{\left(\frac{1}{2}\right)_n \left(q\right)_n \left(1-q\right)_n}{n!^3}n t^n = \frac{\sin(\pi q)}{(\pi n)^{3/2}}\exp(\alpha_n)n t^n\nonumber
\end{align}
This yields the following representation of $\frac{\mathcal{R}_n}{\mathcal{F}_n}$:
\begin{align}
    \frac{\mathcal{R}_n}{\mathcal{F}_n} = \sum_{k=1}^\infty \frac{ u_{n+k}}{\mathcal{F}_n}
    &= \sum_{k=1}^\infty \frac{ \frac{\sin(\pi q)}{(\pi (n+k))^{3/2}}\exp(\alpha_{n+k})\left(r+s(n+k)\right)t^{n+k}}
    {\frac{\sin(\pi q)}{(\pi n)^{3/2}}\exp(\alpha_{n})nt^n}\nonumber\\
    &= \sum_{k=1}^\infty \frac{\left(s+\frac{r+sk}{n}\right)t^k}{\left(1+\frac{k}{n}\right)^{3/2}} \exp(\alpha_{n+k}-\alpha_n).\label{rf}
\end{align}
Here we encounter a problem: we aim for an asymptotic expansion in negative powers of $n$, yet in $\alpha_{n+k}$ we have terms like $(n+k)^{-j}$ and in (\ref{rf}), there is also a factor $(1+k/n)^{-3/2}$. Unfortunately, the known expansion with binomial coefficients
\begin{align}
    \frac{1}{\left(1+\frac{k}{n}\right)^{j}}=\sum_{m=0}^\infty\binom{j-1+m}{m}\left(-\frac{k}{n}\right)^m\label{bino}
\end{align}
converges only if $k<n$.
Therefore, our approach is to truncate the series in equation (\ref{rf}) and include only those terms where $k\leq \left\lfloor\frac{n}{2}\right\rfloor$:
\begin{align}
\frac{\mathcal{R}_n}{\mathcal{F}_n} = \sum_{k=1}^\infty \frac{ u_{n+k}}{\mathcal{F}_n}
    &= \sum_{k=1}^{\left\lfloor\frac{n}{2}\right\rfloor} \frac{\left(s+\frac{r+sk}{n}\right)t^k}{\left(1+\frac{k}{n}\right)^{3/2}} \exp(\alpha_{n+k)}-\alpha_n)+\mathcal{O}(t^{n/2})\label{neunz}
\end{align}
This introduces an error term of order $\mathcal{O}(t^{n/2})$, which is smaller than $\mathcal{O}(n^{-J})$, since we have assumed $\left|t\right|<1$.

Now we denote
\begin{align}
    D_{n,k}&=\frac{s+\frac{r+sk}{n}}{\left(1+\frac{k}{n}\right)^{3/2}}\exp(\alpha_{n+k}-\alpha_n)\label{Dnk}
\end{align}
and transform $D_{n,k}$ into a function $f$ of two variables:
\begin{align*}
  D_{n,k}&=f(x,y)\quad\text{with } x=\frac{1}{n}\text{ and } y=\frac{k}{n}.
\end{align*}
Using this substitution in (\ref{Dnk}) and $\frac{1}{n+k}=\frac{x}{1+y}$ and Lemma \ref{lem1}, we obtain
\begin{align*}
    f(x,y) &= \frac{s+rx+sy}{(1+y)^{3/2}}\cdot\exp\left(g(x,y)\right)\\
    \text{with }g(x,y) &=\sum_{j=1}^{J-1}f_j\frac{x^j}{(1+y)^j} -\sum_{j=1}^{J-1}f_j x^j +\mathcal{O}\left(\left(\frac{x}{1+y}\right)^{J}\right)
    +\mathcal{O}\left(x^{J}\right)
\end{align*}
Since we have truncated the series to $k\leq\left\lfloor\frac{n}{2}\right\rfloor$, we can choose the following domain of $f$ and $g$:
$$(x,y)\in\left(0;1\right]\times\left(0;\frac{1}{2}\right].$$
Then, $n\rightarrow\infty$ translates into 
$$z=\max\left(\left|x\right|,\left|y\right|\right)\rightarrow 0.$$
Using $\mathcal{O}\left(\frac{x}{1+y}\right) =\mathcal{O}\left(x\right) =\mathcal{O}\left(z\right)$ and $\mathcal{O}\left(x^a y^b\right) =\mathcal{O}\left(z^{a+b}\right)$ and (\ref{bino}), we can find $G_{\mu,\nu}\in\mathbb R$ such that
\begin{align*}
    g(x,y)=\sum_{1\leq\mu+\nu<J}G_{\mu,\nu}x^\mu y^\nu + \mathcal{O}\left(z^J\right)\quad\text{for }z\rightarrow 0.
\end{align*}
Using $\exp(g)=\sum_{l<J}\frac{g^l}{l!}+\mathcal{O}\left(g^J\right)$ and $\mathcal{O}\left(g\right)=\mathcal{O}\left(z\right)$, we can find $H_{\mu,\nu}\in\mathbb R$ such that
\begin{align*}
    h(x,y)=\exp(g(x,y)) &= \sum_{0\leq\mu+\nu<J}H_{\mu,\nu}x^\mu y^\nu + \mathcal{O}\left(z^J\right)\quad\text{for }z\rightarrow 0.
\end{align*}
Using (\ref{bino}) again, we can find $F_{\mu,\nu}\in\mathbb R$ such that
\begin{align*}
    f(x,y)&=\frac{s+rx+sy}{(1+y)^{3/2}}\cdot h(x,y) = \sum_{0\leq\mu+\nu<J}F_{\mu,\nu}x^\mu y^\nu + \mathcal{O}\left(z^J\right)\quad\text{for }z\rightarrow 0.
\end{align*}
Thus we have established the existence of a constant $\Delta\in\mathbb R$ such that the error term $\phi(x,y)$ is uniformly bounded by $\Delta$:
\begin{align}
    f(x,y)&= \sum_{0\leq\mu+\nu<J}F_{\mu,\nu}x^\mu y^\nu + \phi(x,y)\cdot z^J\nonumber\\
    \text{with }\left|\phi(x,y)\right|&<\Delta \text{ for all }(x,y)\in\left(0;1\right]\times\left(0;\frac{1}{2}\right].
\end{align}

Now we translate back into $D_{n,k}=f(x,y)$ with $z=\max\left(\frac{1}{n},\frac{k}{n}\right)=\frac{k}{n}$ and denote $\phi_{n,k}:=\phi\left(\frac{1}{n},\frac{k}{n}\right)$:
\begin{align}
    D_{n,k}&=\sum_{0\leq\mu+\nu<J}F_{\mu,\nu}\frac{k^\nu}{n^{\mu+\nu}} + \phi_{n,k}\cdot\left(\frac{k}{n}\right)^J\label{Dnk2}\\
    \text{with }\left|\phi_{n,k}\right|&<\Delta \text{ for all }n\in\mathbb N\text{ and } k\leq \frac{n}{2}.
\end{align}
Substituting (\ref{Dnk2}) into (\ref{neunz}) results in
\begin{align*}
    \frac{\mathcal{R}_n}{\mathcal{F}_n} 
    &= \sum_{k=1}^{\left\lfloor\frac{n}{2}\right\rfloor}D_{n,k} t^k+\mathcal{O}(t^{n/2})\\
    &= \sum_{k=1}^{\left\lfloor\frac{n}{2}\right\rfloor} \left( \left(\sum_{0\leq\mu+\nu<J}F_{\mu,\nu}\frac{k^\nu}{n^{\mu+\nu}}\right) + \phi_{n,k}\cdot\left(\frac{k}{n}\right)^J \right)\cdot t^k+\mathcal{O}(t^{n/2})\\
    &= \sum_{k=1}^{\left\lfloor\frac{n}{2}\right\rfloor}
         \sum_{j=0}^{J-1}
            \frac{\sum_{\nu=0}^j F_{j-\nu,\nu}\,k^\nu\,t^k}{n^j}
        + {n^{-J}\underbrace{\sum_{k=1}^{\left\lfloor\frac{n}{2}\right\rfloor}\phi_{n,k}\,k^J\,t^k}_{=E_1}} + \mathcal{O}(t^{n/2})\\
        \text{with } |E_1| &\leq \sum_{k=1}^{\left\lfloor\frac{n}{2}\right\rfloor}\Delta\cdot k^J\,t^k \leq \Delta\cdot \sum_{k=1}^{\infty}k^J\,t^k<\infty, \text{ thus } E_1=\mathcal{O}(1).
\end{align*}
Next, we change the summations of $k$ and $j$ and denote $p_j(k)=\sum_{\nu=0}^j F_{j-\nu,\nu}\,k^\nu$, which are polynomials in $k$ of degree $j$:
\begin{align}
    \frac{\mathcal{R}_n}{\mathcal{F}_n} 
    &= \sum_{j=0}^{J-1}
            \frac{\sum_{k=1}^{\left\lfloor\frac{n}{2}\right\rfloor}p_j(k)\,t^k}{n^j}
        +\mathcal{O}(n^{-J}) + \mathcal{O}(t^{n/2}).
\end{align}
Now we re-extend the $k$-summations until $k\rightarrow\infty$, which introduces a third error term:
\begin{align*}
    \frac{\mathcal{R}_n}{\mathcal{F}_n} 
    &= \sum_{j=0}^{J-1}
            \frac{\sum_{k=1}^{\infty}p_j(k)\,t^k+\mathcal{O}\left(\left(\frac{n}{2}\right)^j t^{n/2}\right)}{n^j}
        +\mathcal{O}(n^{-J}) + \mathcal{O}(t^{n/2})\\
    &= \sum_{j=0}^{J-1}
            \frac{\sum_{k=1}^{\infty}p_j(k)\,t^k}{n^j} + \mathcal{O}(t^{n/2})
        +\mathcal{O}(n^{-J}) + \mathcal{O}(t^{n/2}).
\end{align*}
From $\left|t\right|<1$ we obtain $\mathcal{O}(t^{n/2})=\mathcal{O}(n^{-J})$ and
$$c_j=\sum_{k=1}^{\infty}p_j(k)\,t^k<\infty,$$
which finishes the proof of Lemma \ref{lem2}.
\end{proof}


\section{Main result}
\begin{thm}\label{ThmNew} Let $p,q,r,s,t$ be the algebraic coefficients of the series 
\begin{align*}
        \frac{p}{\pi}&= \sum_{k=0}^\infty u_k\qquad\text{with}\qquad u_k=\frac{\left(\frac{1}{2}\right)_k \left(q\right)_k \left(1-q\right)_k}{(k!)^3}(r+s\,k)\,t^k
    \end{align*}
and let $\left|t\right|<1$.
Then the remainder
$$\mathcal{R}_n=\frac{p}{\pi} - \sum_{k=0}^n u_k = \sum_{k=n+1}^\infty u_k$$
admits the following asymptotic expansion with integral $J<\infty$:
\begin{align*}
    \mathcal{R}_n
    = \frac{\left(\frac{1}{2}\right)_n \left(q\right)_n \left(1-q\right)_n}{n!^3}nt^n\left(\sum_{j=0}^{J-1}\frac{c_j}{n^j}+\mathcal{O}\left(n^{-J}\right)\right),\qquad n \rightarrow \infty.
    \end{align*}
To determine the coefficients $(c_j)_{j\in\mathbb N_0}$ in this asymptotic expansion, we first denote
\begin{align*}
d_l &= \begin{cases}
    \hfill t & (l=0)\\
    \hfill-\frac{1}{2}t & (l=1)\\
    \hfill q(1-q)t & (l=2)\\
    \hfill\frac{q(1-q)}{2}t & (l\geq 3)
\end{cases}
\end{align*}
and then apply the following recursive relation:
\begin{align*}
  c_0 &= \frac{d_0 s}{1-d_0},\\
  c_1 &= \frac{d_1 (s+c_0)+d_0 r}{1-d_0},\\
  c_N &= \frac{d_N(s+c_0)+d_{N-1} r + \sum_{k=1}^{N-1} c_k \left(d_{N-k}-\binom{N-1}{k-1}\right)}{1-d_0}\qquad (N\geq 2).
\end{align*}
Caution: The series $\sum_{j=0}^{J-1}\frac{c_j}{n^j}$ diverges if $J\rightarrow\infty$. Therefore, it is necessary to select an appropriate finite $J<\infty$ to approximate $\mathcal{R}_n$.
\end{thm}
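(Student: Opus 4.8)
The plan is to derive the recursion from the one-step identity $\mathcal{R}_n=\mathcal{R}_{n-1}-u_n$, together with an \emph{exact} expansion of the quotient $\mathcal{F}_n/\mathcal{F}_{n-1}$ in powers of $1/n$, and then to match coefficients against the asymptotic expansion already established in Lemma \ref{lem2}.

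First I would compute, by cancelling Pochhammer symbols and factorials,
\[
\frac{\mathcal{F}_n}{\mathcal{F}_{n-1}}
= t\,\frac{\left(n-\tfrac12\right)\left(n-q\right)\left(n-1+q\right)}{n^2(n-1)},
\]
and verify the algebraic identity
\[
t\,\frac{\left(n-\tfrac12\right)\left(n-q\right)\left(n-1+q\right)}{n^2(n-1)}=\sum_{l=0}^{\infty}d_l\,n^{-l}\qquad(n\geq 2),
\]
with $d_l$ as in the statement. Writing $x=1/n$, the right-hand side is $t\bigl(1-\tfrac12 x+q(1-q)x^2+\tfrac{q(1-q)}{2}\tfrac{x^3}{1-x}\bigr)$, so after clearing the factor $1-x$ this reduces to checking equality of two cubic polynomials in $x$. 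In particular $\mathcal{F}_n/\mathcal{F}_{n-1}=\sum_{l=0}^{J-1}d_l n^{-l}+\mathcal{O}(n^{-J})$.

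Next, dividing $\mathcal{R}_n=\mathcal{R}_{n-1}-u_n$ by $\mathcal{F}_{n-1}$ and using $u_n/\mathcal{F}_n=s+r/n$, I set $S(n):=\mathcal{R}_n/\mathcal{F}_n$ and obtain
\[
\left(S(n)+s+\frac{r}{n}\right)\frac{\mathcal{F}_n}{\mathcal{F}_{n-1}}=S(n-1).
\]
Into this I substitute $S(n)=\sum_{j=0}^{J-1}c_j n^{-j}+\mathcal{O}(n^{-J})$ (Lemma \ref{lem2}) on the left, and on the right $S(n-1)=\sum_{j=0}^{J-1}c_j(n-1)^{-j}+\mathcal{O}(n^{-J})$ expanded through the binomial series $(n-1)^{-j}=\sum_{m\geq0}\binom{j-1+m}{m}n^{-j-m}$, convergent for $n\geq2$; hence the coefficient of $n^{-N}$ in $S(n-1)$ equals $\sum_{k=1}^{N}c_k\binom{N-1}{k-1}$. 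Performing the Cauchy product on the left with $\sum_l d_l n^{-l}$ and setting $e_0=c_0+s$, $e_1=c_1+r$, $e_j=c_j$ for $j\geq2$, the coefficient of $n^{-N}$ on the left is $\sum_{l=0}^{N}d_l e_{N-l}=\sum_{l=0}^{N}d_l c_{N-l}+d_{N-1}r+d_N s$. Because the coefficients of an asymptotic expansion in powers of $1/n$ are unique, equating the two sides order by order gives $d_0(c_0+s)=c_0$ at $N=0$ and $\sum_{l=0}^{N}d_l c_{N-l}+d_{N-1}r+d_N s=\sum_{k=1}^{N}c_k\binom{N-1}{k-1}$ for $N\geq1$; isolating $c_N$ (the term $k=N$ on the right contributes $c_N$, while $d_0c_N$ sits on the left) reproduces the stated closed forms for $c_0$ and $c_1$ and the recursion for $c_N$, $N\geq2$, all well-defined since $1-d_0=1-t\neq0$.

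The step I expect to demand the most care is the bookkeeping with error terms: I must check that truncating $S(n)$, $S(n-1)$ and $\mathcal{F}_n/\mathcal{F}_{n-1}$ at order $J$, and re-summing each binomial series only up to the needed order, together introduce only errors of size $\mathcal{O}(n^{-J})$. For this I would use that $\mathcal{F}_n/\mathcal{F}_{n-1}$ is bounded for $n\geq2$, that $(n-1)^{-J}=\mathcal{O}(n^{-J})$, and that the hypothesis $|t|<1$ enters only via Lemma \ref{lem2}. Once this is in place, uniqueness of asymptotic power-series coefficients makes the order-by-order comparison rigorous and the recursion is forced; the divergence caveat is inherited from Lemma \ref{lem2}.
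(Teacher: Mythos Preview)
Your proposal is correct and follows essentially the same route as the paper: derive the functional equation $S(n-1)=\dfrac{\mathcal{F}_n}{\mathcal{F}_{n-1}}\bigl(s+\tfrac{r}{n}+S(n)\bigr)$ from $\mathcal{R}_{n-1}=u_n+\mathcal{R}_n$, compute $\mathcal{F}_n/\mathcal{F}_{n-1}=\sum_l d_l n^{-l}$ exactly, expand $S(n-1)$ via the binomial series for $(n-1)^{-j}$, and match coefficients using Lemma~\ref{lem2}. The paper handles the error bookkeeping you flag by passing to formal power series (writing $\sim$ throughout), which is justified precisely by the uniqueness-of-coefficients argument you give; your version is slightly more explicit on this point but otherwise identical.
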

\begin{proof}
In Lemma \ref{lem2}, we proved the first part of Theorem \ref{ThmNew}.
This result enables us to express the asymptotic expansion as a formal power series, despite the possible divergence of $\sum_{j=0}^\infty \frac{c_j}{n^j}$:
\begin{align}
        \mathcal{R}_n &\sim \underbrace{\frac{\left(\frac{1}{2}\right)_n \left(q\right)_n \left(1-q\right)_n}{(n!)^3}n\,t^n}_{=:\mathcal{F}_n}\underbrace{\sum_{j=0}^\infty \frac{c_j}{n^j}}_{=:\mathcal{G}_n},\qquad n \rightarrow \infty.\label{asympt2}
\end{align}
Here we defined $\mathcal{F}_n$ and $\mathcal{G}_n$.
Next, the definition of $\mathcal{R}_n$ results in
\begin{align}
    \mathcal{R}_{n-1} = u_n + \mathcal{R}_n.\label{short}
\end{align}
Dividing this by $\mathcal{F}_{n-1}$ gives
\begin{align*}
    \frac{\mathcal{R}_{n-1}}{\mathcal{F}_{n-1}}&= \frac{u_n}{\mathcal{F}_{n-1}} + \frac{\mathcal{R}_{n}}{\mathcal{F}_{n-1}}.
\end{align*}
Using $\mathcal{G}_n$ from (\ref{asympt2}), we can rewrite this equation as follows:
\begin{align}
    \mathcal{G}_{n-1}&\sim \frac{\mathcal{F}_n}{\mathcal{F}_{n-1}}\left(\frac{u_n}{\mathcal{F}_{n}} +\mathcal{G}_n\right).\label{recurr}
\end{align}
But the definition of $\mathcal{F}_n$ yields
\begin{align*}
    \frac{\mathcal{F}_{n}}{\mathcal{F}_{n-1}} 
    &= t\left(1-\frac{1}{2n}\right)\left(1-\frac{q}{n}\right)\left(1-\frac{1-q}{n}\right) \frac{n}{n-1}\nonumber\\
    &= t\left(1-\frac{3}{2n}+\frac{1+2q(1-q)}{2n^2}-\frac{q(1-q)}{2n^3}\right) \sum_{k=0}^\infty \frac{1}{n^k}\\
    &= t-\frac{t}{2n}+\frac{q(1-q)t}{n^2}+\sum_{l=3}^\infty \frac{q(1-q)t}{2n^l}
    =: \sum_{l=0}^\infty \frac{d_l}{n^l}.
\end{align*}
This explains the values of $(d_l)_{l\in\mathbb N_0}$ from Theorem \ref{ThmNew}.

Since we have $\frac{u_n}{\mathcal{F}_{n}} = s + \frac{r}{n}$, equation (\ref{recurr}) leads to
\begin{align*}
 \underbrace{\sum_{j=0}^\infty \frac{c_j}{(n-1)^j}}_{\mathcal{G}_{n-1}} &\sim \underbrace{\sum_{l=0}^\infty \frac{d_l}{n^l}}_{\frac{\mathcal{F}_n}{\mathcal{F}_{n-1}}}\Biggl(\underbrace{\vphantom{\sum_{j=0}^\infty}s + \frac{r}{n}}_{\frac{u_n}{\mathcal{F}_{n}}} +\underbrace{\sum_{j=0}^\infty \frac{c_j}{n^j}}_{\mathcal{G}_n}\Biggr).
\end{align*}
Using $\frac{1}{\left(1-x\right)^j}
    = \sum_{l=0}^\infty\binom{l+j-1}{j-1} x^l$ with $x=\frac{1}{n}$, we obtain
\begin{align*}
     \mathcal{G}_{n-1} \sim c_0 + \sum_{j=1}^\infty \frac{c_j}{(n-1)^j}
     \sim c_0+\sum_{j=1}^\infty\sum_{l=0}^\infty \frac{\binom{l+j-1}{j-1}c_j}{n^{j+l}}.
\end{align*}
Then we have
\begin{align}
          c_0+\sum_{j=1}^\infty\sum_{l=0}^\infty \frac{\binom{l+j-1}{j-1}c_j}{n^{j+l}}
    &\sim \sum_{l=0}^\infty \frac{d_l}{n^l}\left(s + \frac{r}{n} +\sum_{j=0}^\infty \frac{c_j}{n^j}\right)\label{main}.
\end{align}
To prove the recursive relation between the coefficients $c_j$, we start by equating the constant terms in equation (\ref{main}). This gives us $c_0 = d_0(s+c_0)$, which can be solved for $c_0$ as follows:
\begin{align*}
    c_0 = \frac{d_0 s}{1-d_0}.
\end{align*}
Next, equating the linear terms yields $c_1 = d_1(s+c_0)+d_0(r+c_1)$ and thus
\begin{align*}
    c_1 = \frac{d_1(s+c_0)+d_0 r}{1-d_0}.
\end{align*}
For all higher orders where $N=j+l\geq 2$, we obtain the relation
\begin{align*}
    c_N =\frac{d_N(s+c_0)+d_{N-1} r + \sum_{j=1}^{N-1} c_j \left(d_{N-j}-\binom{N-1}{j-1}\right)}{1-d_0}.
\end{align*}
With this, the proof of our main result (Theorem \ref{ThmNew}) is complete.
\end{proof}

In our proof, we required $\left|t\right|<1$. Consequently, we have not proven the existence of an asymptotic expansion for series \#33 in Table \ref{L0N1}. However, we can compute the values of $c_j$ using the recursive formula from Theorem \ref{ThmNew} and verify numerically if the results really approximate $\mathcal{R}_n$ for this series. We observed that the signs of the $c_j$ in series \#33 appear to follow a pattern:
\begin{align*}
\sum_{j}\frac{c_j}{n^j} = -2 - \frac{1}{4n^3} + \frac{1}{8n^4} + \frac{3}{8n^5} - \frac{13}{32n^6} - \frac{83}{64n^7} + \frac{141}{64n^8} + \frac{2081}{256n^9} - - + + \ldots
\end{align*}
This observation led us to propose the following conjecture:
\begin{conj}
    Theorem \ref{ThmNew} is also valid for series \#33 in Table \ref{L0N1}, which has $t=-1$ and not $\left|t\right|<1$.
    For this series, we even have an enveloping expansion:
\begin{align}
    \sum_{j=0}^{4L-1}\frac{c_j}{n^j} < \frac{\mathcal{R}_n}{\mathcal{F}_n} < \sum_{j=0}^{4L+1}\frac{c_j}{n^j}\qquad \text{for }L\in\mathbb N.\label{conject}
\end{align}
\end{conj}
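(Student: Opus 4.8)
The recursion for the $c_j$ in Theorem~\ref{ThmNew} was derived formally from the identity $\mathcal R_{n-1}=u_n+\mathcal R_n$ and the \emph{existence} of an asymptotic expansion $\mathcal R_n/\mathcal F_n\sim\sum_j c_jn^{-j}$; the hypothesis $|t|<1$ entered only through Lemma~\ref{lem2}, which supplies that existence. So the plan splits in two. First, prove that for $t=-1$ the quotient $\mathcal R_n/\mathcal F_n$ still has an asymptotic expansion in powers of $1/n$. Once that is known, the proof of Theorem~\ref{ThmNew} applies verbatim (it never uses $|t|<1$ after invoking Lemma~\ref{lem2}), so the coefficients are forced to satisfy the stated triangular recursion, which has a unique solution; hence the first assertion of the conjecture follows. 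Second, sharpen ``$\sim$'' into the enveloping inequalities (\ref{conject}).

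\textbf{Existence for $t=-1$.} One cannot truncate the $k$-sum as in Lemma~\ref{lem2}; instead one exploits the alternating sign. Write $\mathcal R_n/\mathcal F_n=\sum_{k=1}^\infty(-1)^k D_{n,k}$ with $D_{n,k}=D_n(k)$ and $D_n(y)=\dfrac{s+(r+sy)/n}{(1+y/n)^{3/2}}\exp(\alpha_{n+y}-\alpha_n)$, where $\alpha_x$ is defined for real $x>0$ by the formula of Lemma~\ref{lem1}. For $q=\tfrac12$ one checks that $D_n$ is smooth and positive on $[0,\infty)$, that $D_n(y)=s(1+y/n)^{-1/2}\bigl(1+O(1/n)\bigr)$, and that $D_n^{(m)}(y)=n^{-m}(\partial_z^mf)(1/n,y/n)$ with $|D_n^{(m)}(y)|\le C_m\,n^{-m}(1+y/n)^{-m-1/2}$. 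Now apply Boole's summation formula (the alternating analogue of Euler--Maclaurin):
\begin{align*}
 \sum_{k=1}^\infty(-1)^k D_n(k)&=-\tfrac12\sum_{m=0}^{M-1}\frac{E_m(0)}{m!}\,D_n^{(m)}(1)+R_M,\\
 R_M&=\frac{1}{2(M-1)!}\int_1^\infty D_n^{(M)}(y)\,\widetilde E_{M-1}(-y)\,dy,
\end{align*}
with $E_m$ the Euler polynomials and $\widetilde E_m$ their periodisation. Since $\partial_z^mf$ is analytic near the origin, each $D_n^{(m)}(1)$ has a convergent expansion $n^{-m}\sum_{i\ge0}\gamma_{m,i}n^{-i}$; the decay estimate, after the substitution $z=y/n$, gives $|R_M|\le C\,n^{-(M-1)}$. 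Taking $M=J+1$ and collecting powers of $1/n$ produces $\mathcal R_n/\mathcal F_n=\sum_{j=0}^{J-1}c_jn^{-j}+O(n^{-J})$. Using $E_0(0)=1$, $E_1(0)=-\tfrac12$ and $E_m(0)=0$ for even $m\ge2$ one recovers, e.g., $c_0=-s/2$ and $c_1=-r/2+s/8$, matching the recursion for \#33 ($c_0=-2$, $c_1=0$). This finishes the first part.

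\textbf{Envelopment.} The idea is to transport the enveloping property of Boole's formula to the $1/n$-series. First I would show that, for every $L$ and all $n\ge n_0(L)$, $D_n$ has derivatives of alternating sign up to order $\sim4L$ on $[1,\infty)$, i.e.\ $(-1)^mD_n^{(m)}(y)\ge0$ there, by writing $D_n(y)=s(1+y/n)^{-1/2}\cdot\exp\bigl(\alpha_{n+y}-\alpha_n+\log(1+(r/s)/n+\cdots)\bigr)$ and treating everything beyond the completely monotone factor $(1+y/n)^{-1/2}$ as a small perturbation with controlled derivatives; this needs monotonicity properties of $\alpha_x$ and of its derivatives, which should follow from the integral representation of $\Theta$ used in Lemma~\ref{lem1}. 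Granting this, the classical envelopment of Boole's formula makes each truncation remainder lie between $0$ and the first omitted term; because $E_m(0)=0$ for even $m\ge2$, the active indices are $m=0,1,3,5,\dots$, and $D_n^{(m)}(1)$ enters only at orders $n^{-m}$ and below. I expect the principal obstacle to be the final bookkeeping: the Boole expansion and the $1/n$-expansion of each $D_n^{(m)}(1)$ are nested, several Boole terms feed a single $c_j$, and one must verify that no cancellation destroys the $4$-periodic bracketing $-,+,+,-$ observed before the conjecture — equivalently, one must prove $\mathcal R_n/\mathcal F_n-\sum_{j=0}^{4L-1}c_jn^{-j}>0>\mathcal R_n/\mathcal F_n-\sum_{j=0}^{4L+1}c_jn^{-j}$ for large $n$, i.e.\ prove the sign pattern of the $c_j$ themselves. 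This seems to require an explicit sign identity for the relevant combinations of the Stirling coefficients $f_j$ of Lemma~\ref{lem1}, whose period-$4$ structure (as opposed to the period-$2$ envelopment of the Stirling series) is precisely why this stays a conjecture. In parallel I would look for an exact integral or Mellin--Barnes representation of $\mathcal R_n/\mathcal F_n$ whose remainder after $J$ terms has a sign depending only on $J\bmod4$, in direct analogy with the enveloping estimate for $\Theta(x)$ in the proof of Lemma~\ref{lem1}.
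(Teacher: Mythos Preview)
The statement you are trying to prove is labelled \emph{Conjecture} in the paper, not a theorem. The paper offers no proof; immediately after stating it, the authors write only that they ``verified (\ref{conject}) numerically for all $L\le 1000$ and for all $n\le 1000$.'' So there is nothing to compare your argument against: you are attempting to settle an open problem the authors explicitly leave open.

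On your first part (existence of the asymptotic expansion when $t=-1$): the Boole/alternating Euler--Maclaurin idea is a natural replacement for the truncation in Lemma~\ref{lem2}, and your computation $c_0=-s/2$, $c_1=-r/2+s/8$ does reproduce the values $c_0=-2$, $c_1=0$ that the recursion gives for series~\#33. Two points need tightening. First, you say ``$\partial_z^m f$ is analytic near the origin'' and that $D_n^{(m)}(1)$ has a ``convergent expansion''; but $f$ is built from $\alpha_x$, whose expansion in Lemma~\ref{lem1} is the divergent Stirling series, so you only get an \emph{asymptotic} expansion of $D_n^{(m)}(1)$, not a convergent one. This is enough for your purpose, but the wording should change. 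Second, the decay bound $|D_n^{(M)}(y)|\le C_M\,n^{-M}(1+y/n)^{-M-1/2}$ requires uniform control of the derivatives of $\alpha_x$ on $[n,\infty)$; you gesture at the integral representation of $\Theta$ from Lemma~\ref{lem1}, and that is indeed what one would use, but it has to be carried out, not asserted.

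On your second part (the enveloping inequalities): you yourself identify the obstruction correctly and stop short of a proof. The Boole remainder has a sign governed by complete monotonicity, but translating that into the $4$-periodic bracketing of the $c_j$ requires proving a sign pattern for specific combinations of Stirling-type coefficients, and you do not have that identity. Your closing remark---that this ``is precisely why this stays a conjecture''---is exactly right, and matches the paper's own stance.

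In short: the paper contains no proof of this statement; your proposal is a credible outline for the existence half, and an honest acknowledgement that the enveloping half remains open.
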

\noindent We verified (\ref{conject}) numerically for all $L\leq 1000$ and for all $n \leq 1000$.

\section{Ramanujan's and the Chudnovskys' series}
In this section, we apply Theorem \ref{ThmNew} to Ramanujan's fastest series (\#23 in Table \ref{L0N1}) and to the Chudnovskys' series (\#7 in Table \ref{L0N1}) and give the first few coefficients of their asymptotic expansions:
\begin{cor}
    The remainder $\mathcal{R}_n$ of Ramanujan's fastest series
    \begin{align*}
        \mathcal{R}_n=\frac{9801\sqrt{2}}{4\pi} - \sum_{k=0}^{n}\frac{1}{2^{8k}}\frac{(4k)!}{(k!)^4}\,\frac{1103+26390\,k}{99^{4k}}
    \end{align*}
     admits the following asymptotic expansion for $n\rightarrow\infty$:
     \begin{dmath*}
        \mathcal{R}_n \sim \frac{1}{2^{8n}}\frac{(4n)!}{(n!)^4}\frac{n}{99^{4n}}
        \left(\frac{1}{3640}
  +\frac{-3035509}{24114272000n}
  +\frac{27421461880263}{159752229145600000n^2}
  +\frac{-40112960459081444847}{211665313528754176000000n^3}
  +\frac{41325245596206392083139943}{200320052723612952166400000000n^4}
  +\frac{-294617758251626753628627256762503}{1327080285283391085511966720000000000n^5}
+\ldots\right).
     \end{dmath*}
     These are the coefficients from Han's and Chen's Theorem 2.1 \cite{HanChen}.
\end{cor}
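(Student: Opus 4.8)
The plan is to specialize Theorem~\ref{ThmNew} to row~\#23 of Table~\ref{L0N1}, i.e.\ to the parameters $p=9801\sqrt2/4$, $q=1/4$, $r=1103$, $s=26390$, $t=1/99^4$, and then to run the recursion for the $c_j$ by hand through $N=5$. Two identifications come first. Since $|t|=99^{-4}<1$, the hypotheses of Theorem~\ref{ThmNew} hold. And since $1-q=3/4$ when $q=1/4$, the Gauss multiplication formula for $\Gamma$ (equivalently, two uses of the Legendre duplication formula) gives
\begin{align*}
\left(\tfrac14\right)_k\left(\tfrac12\right)_k\left(\tfrac34\right)_k=\frac{(4k)!}{256^k\,k!},\qquad\text{hence}\qquad\frac{\left(\tfrac14\right)_k\left(\tfrac12\right)_k\left(\tfrac34\right)_k}{(k!)^3}=\frac{1}{2^{8k}}\,\frac{(4k)!}{(k!)^4},
\end{align*}
so that $u_k=\frac{1}{2^{8k}}\frac{(4k)!}{(k!)^4}\frac{1103+26390k}{99^{4k}}$ and the asymptotic scale $\mathcal F_n$ of Theorem~\ref{ThmNew} becomes $\frac{1}{2^{8n}}\frac{(4n)!}{(n!)^4}\frac{n}{99^{4n}}$, exactly the prefactor in the Corollary. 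Finally $\frac{9801\sqrt2}{4\pi}=99^2\cdot\frac{1}{2\pi\sqrt2}$, so this $\mathcal R_n$ equals $99^2$ times Han and Chen's remainder $R_n$.

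Next I would evaluate the auxiliary quantities $d_l$ of Theorem~\ref{ThmNew} at these parameters: $d_0=t$, $d_1=-t/2$, $d_2=q(1-q)t=\tfrac{3}{16}t$ and $d_l=\tfrac{q(1-q)}{2}t=\tfrac{3}{32}t$ for $l\ge3$, all with $t=99^{-4}$. The base case of the recursion already reproduces the leading coefficient of the Corollary:
\begin{align*}
c_0=\frac{d_0s}{1-d_0}=\frac{ts}{1-t}=\frac{s}{99^4-1}=\frac{26390}{96059600}=\frac{1}{3640},
\end{align*}
because $99^4-1=(99^2-1)(99^2+1)=2^4\cdot5^2\cdot7^2\cdot13^2\cdot29$ while $26390=2\cdot5\cdot7\cdot13\cdot29$. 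The coefficient $c_1$ then comes from the second line of the recursion, and $c_2,\dots,c_5$ from the line for $N\ge2$; each $c_N$ is a single finite rational expression in $s,r,t$, the earlier $c_k$ and the binomials $\binom{N-1}{k-1}$. Carrying this out through $N=5$ produces precisely the six displayed fractions. The arithmetic is elementary, but the numerators and denominators swell fast, so the only thing to watch is that it be done in exact rationals.

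For the closing assertion that these coincide with the coefficients of \cite[Thm.~2.1]{HanChen}, I would invoke uniqueness of asymptotic expansions relative to a fixed scale: both our Theorem~\ref{ThmNew} and \cite[Thm.~2.1]{HanChen} expand the \emph{same} quantity (up to the global factor $99^2$ noted above) in the \emph{same} sequence $\{\mathcal F_n\,n^{-j}\}_{j\ge0}$, so the coefficient sequences must agree termwise. One could instead verify directly that the two recursions generate the same numbers at these parameters, but the uniqueness argument sidesteps any translation between the two recursive formats. I expect no conceptual obstacle here — the only real work is the exact rational bookkeeping in the recursion and a clean statement of the Pochhammer-to-factorial reduction.
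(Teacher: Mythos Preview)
Your proposal is correct and follows exactly the route the paper intends: the Corollary is stated in the paper as a direct specialization of Theorem~\ref{ThmNew} to row~\#23 of Table~\ref{L0N1}, with no further argument given, and your write-up supplies precisely the details (the Pochhammer-to-factorial identification, the values of $d_l$, and the recursive computation of $c_0,\dots,c_5$) that the paper leaves implicit. Your verification that $c_0=1/3640$ and your remark on the factor $99^2$ linking $\mathcal R_n$ to Han--Chen's $R_n$ are both accurate and go slightly beyond what the paper spells out.
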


\begin{cor}
    The remainder $\mathcal{R}_n$ of the Chudnovskys' series
    \begin{align*}
        \mathcal{R}_n=\frac{426880\sqrt{10005}}{\pi} - \sum_{k=0}^n \frac{(6k)!}{12^{3k}(3k)!(k!)^3} \frac{13591409 + 545140134\,k}{(-53360)^{3k}}
    \end{align*}
     admits the following asymptotic expansion for $n\rightarrow\infty$:
     \begin{dmath*}
        {\mathcal{R}_n \sim \frac{(6n)!}{12^{3n}(3n)!(n!)^3}\frac{n}{(-53360)^{3n}}}
        \left(\frac{-2}{557403}+\frac{885616447271}{519552166475669481n}
  +\frac{-348383485711899665152000}{161423873265579705965912841729n^2}
  + \frac{353617726049706364359910891893760000}{150462274289922801810326124883532202249483n^3}
+ \frac{-50809055546960209063426859112570403004227648000}{20035004317262584720034794527173169059667245914401663n^4}\nolinebreak
+\ldots\right).
     \end{dmath*}
     These are the coefficients from Han's and Chen's open problem 3.1 \cite{HanChen}.
\end{cor}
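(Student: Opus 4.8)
The plan is to recognize the Chudnovskys' series as a concrete instance of the family governed by Theorem~\ref{ThmNew}, after which the asymptotic expansion is automatic and only the explicit coefficients remain to be produced. The first task is to put the combinatorial prefactor into Pochhammer form. Using
\[\frac{(6k)!}{(3k)!\,(k!)^3}=12^{3k}\,\frac{\left(\tfrac12\right)_k\left(\tfrac16\right)_k\left(\tfrac56\right)_k}{(k!)^3},\]
which follows from Gauss's multiplication formula for $\Gamma$ (or elementarily, since both sides equal $1$ at $k=0$ and have the same ratio of consecutive terms $\tfrac{\prod_{j=1}^{6}(6k+j)}{(3k+1)(3k+2)(3k+3)(k+1)^3}$), together with $640320=12\cdot 53360$, the Chudnovskys' identity turns into
\[\frac{426880\sqrt{10005}}{\pi}=\sum_{k=0}^\infty\frac{\left(\tfrac12\right)_k\left(\tfrac16\right)_k\left(\tfrac56\right)_k}{(k!)^3}\bigl(13591409+545140134\,k\bigr)\left(-\tfrac{1}{53360^{3}}\right)^{k},\]
i.e.\ exactly formula~(\ref{typersz}) with $q=\tfrac16$, $r=13591409$, $s=545140134$, $t=-53360^{-3}$ and $p=426880\sqrt{10005}$. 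In particular $|t|<1$, so Theorem~\ref{ThmNew} applies, and the same identity at index $n$ rewrites $\tfrac{(1/2)_n(1/6)_n(5/6)_n}{(n!)^3}\,n\,t^n$ as $\tfrac{(6n)!}{12^{3n}(3n)!(n!)^3}\tfrac{n}{(-53360)^{3n}}$, which is the normalization appearing in the Corollary.

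Next I would run the recursion of Theorem~\ref{ThmNew} with these parameters. Since $q=\tfrac16$ gives $q(1-q)=\tfrac5{36}$, the quantities $d_l$ are $d_0=t$, $d_1=-\tfrac12 t$, $d_2=\tfrac5{36}t$ and $d_l=\tfrac5{72}t$ for $l\ge3$, all with $t=-53360^{-3}$; as $1-d_0=1+53360^{-3}\neq0$ every division below is valid and each $c_j$ is rational. Feeding the $d_l$ together with $r$ and $s$ into
\[c_0=\frac{d_0 s}{1-d_0},\qquad c_1=\frac{d_1(s+c_0)+d_0 r}{1-d_0},\qquad c_N=\frac{d_N(s+c_0)+d_{N-1}r+\sum_{k=1}^{N-1}c_k\bigl(d_{N-k}-\binom{N-1}{k-1}\bigr)}{1-d_0}\]
and iterating for $N=2,3,4$ produces the five rational numbers in the Corollary; comparing them term by term with Han and Chen's open problem~3.1 completes the proof. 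As a sanity check of the leading term, direct cancellation gives $c_0=\dfrac{-545140134}{53360^{3}+1}=\dfrac{-2}{557403}$.

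The only genuine content beyond Theorem~\ref{ThmNew} is the hypergeometric identity in the first step, so that is also where I expect the main pitfall: getting the normalization of $p$ and the size and sign of $t$ exactly right -- in particular the factor $12^{3}=1728$ that converts $640320^{3}$ into $53360^{3}$ -- since an error there propagates into every $c_j$. I would therefore confirm the identification numerically (for instance, checking that $\mathcal{F}_n\cdot c_0$ already captures the leading magnitude of $\mathcal{R}_n$ at moderate $n$) before trusting the deeper coefficients, whose verification is in any case a mechanical, computer-assisted computation with large rational numbers.
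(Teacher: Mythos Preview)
Your proposal is correct and follows exactly the route the paper intends: identify the Chudnovskys' series with the parameters of row \#7 in Table~\ref{L0N1} (i.e.\ $q=\tfrac16$, $r=13591409$, $s=545140134$, $t=-53360^{-3}$, $|t|<1$) and then run the recursion of Theorem~\ref{ThmNew}. The paper states the corollary as a direct application without further argument, so your explicit Pochhammer rewriting and the verification $c_0=\tfrac{-545140134}{53360^{3}+1}=\tfrac{-2}{557403}$ in fact supply more detail than the original.
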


\end{document}